\newtheorem{theorem}{Theorem}[section]
\newtheorem{prop}[theorem]{Proposition}
\newenvironment{proof}{\prepf\rm}{\endprepf}
\newcommand{\per}{\mathop{\mathrm{per}}}
\begin{document}

\title{Hall's marriage theorem}
\author{Peter J. Cameron\\University of St Andrews\\
\texttt{pjc20@st-andrews.ac.uk}}
\date{}
\maketitle

\begin{abstract}
In 1935, Philip Hall published what is often referred to as ``Hall's marriage
theorem'' in a short paper (P.~Hall, On Representatives of Subsets,
\textit{J. Lond. Math. Soc.} (1) \textbf{10} (1935), no.1, 26--30.) This
paper has been very influential. I state the theorem and outline Hall's proof,
together with some equivalent (or stronger) earlier results, and proceed to
discuss some the many directions in combinatorics and beyond which this theorem
has influenced.
\medskip

\noindent MSC: Primary 05D15; secondary 05A16, 05B15, 05C70

\end{abstract}

\section{Introduction}

According to the zbMath database, Philip Hall wrote only 53 papers, a
relatively small number, 48 of which were single-authored. His name is known
to generations of group theorists because of his profound contributions to
the subject. But his most cited paper, with nearly twice as many citations
as the Hall--Higman paper in second place~\cite{hall_higman}, is the one in
which he proved what is now called \emph{Hall's marriage theorem}~\cite{hall_p}.
It is titled ``On representatives of subsets'', and is four and a
half pages long.

The application which Hall may have had in mind is the fact that, if $G$
is a group and $H$ a subgroup of finite index $n$, then there are $n$
elements $g_1,\ldots,g_n\in G$ which are simultaneously left and right coset
representatives of $H$ in $G$. However, John Britnell informs me that he and
Mark Wildon searched Hall's published work and found no evidence that he ever
used this result. Indeed, the result had been published by G.~A.~Miller in
1910~\cite{miller}.

Before stating the theorem, I give one definition, with a comment. Let
$(T_1,\ldots,T_n)$ be an $n$-tuple of subsets of a finite set $S$. A
\emph{system of distinct representatives} or SDR for the $n$-tuple is an
$n$-tuple $(a_1,\ldots,a_n)$ of elements of $S$ such that $a_i\in T_i$ for
all $i$, and $a_i\ne a_j$ for $i\ne j$. (Hall calls this a complete set of
distinct representatives, or CDR; to avoid confusion I have used the current
term.) Also, the set $\{a_1,\ldots,a_n\}$ is called a \emph{transversal} to
the family. 

\textit{Hall's Theorem} states:

\begin{theorem}
A necessary and sufficient condition for the existence of an SDR for a
tuple $(T_1,\ldots,T_n)$ of sets is that, for $0\le k\le n$, any $k$ of
the sets contain between them at least $k$ elements of $S$.
\end{theorem}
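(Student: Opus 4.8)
The plan is to handle necessity in one line and then prove sufficiency by induction on $n$. Necessity is clear: if $(a_1,\ldots,a_n)$ is an SDR and $I$ is any set of $k$ indices, the elements $a_i$ for $i\in I$ are $k$ distinct members of $\bigcup_{i\in I}T_i$, so that union has size at least $k$ (the case $k=0$ being vacuous).

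For sufficiency I would induct on $n$. The case $n=1$ is immediate, since taking $k=1$ forces $T_1\ne\emptyset$. For the inductive step, assume the result for every tuple of fewer than $n$ sets and let $(T_1,\ldots,T_n)$ satisfy Hall's condition. The argument branches on whether some \emph{critical} subfamily exists, meaning a choice of $k$ sets with $1\le k\le n-1$ whose union has size exactly $k$.

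First suppose there is no critical subfamily, so that any $k$ of the sets with $1\le k\le n-1$ have union of size at least $k+1$. Choose any $a_n\in T_n$ and delete it from the other sets, forming $T_i'=T_i\setminus\{a_n\}$ for $i<n$. Deleting one point shrinks each relevant union by at most $1$, and we started with a surplus of at least $1$, so $(T_1',\ldots,T_{n-1}')$ still satisfies Hall's condition; by induction it has an SDR $(a_1,\ldots,a_{n-1})$, and since each $a_i\ne a_n$, adjoining $a_n$ gives an SDR for the original tuple. Otherwise, after relabelling, $T_1,\ldots,T_k$ have union $U$ with $|U|=k$ and $1\le k\le n-1$. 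This subfamily satisfies Hall's condition, so by induction it has an SDR, which necessarily uses up all of $U$. For the remaining sets put $T_j'=T_j\setminus U$ with $k<j\le n$; the point is that $(T_{k+1}',\ldots,T_n')$ again satisfies Hall's condition, because for any $m$ of these sets, applying the hypothesis to them together with $T_1,\ldots,T_k$ shows their union together with $U$ has size at least $m+k$, hence deleting the $k$ elements of $U$ leaves a union of size at least $m$. By induction $(T_{k+1}',\ldots,T_n')$ has an SDR, which lies outside $U$ and so is disjoint from the first SDR; concatenating the two yields an SDR for $(T_1,\ldots,T_n)$.

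I expect the main obstacle to be finding the right case distinction in the inductive step --- the split on the existence of a critical subfamily --- and, in that case, verifying that both the critical family and the family of deleted residuals inherit Hall's condition, so that the induction hypothesis can be applied to each; the rest is bookkeeping. (One could instead deduce sufficiency from K\"onig's theorem or from an augmenting-path argument in the associated bipartite graph, but the inductive proof above is the one I would present, and it is close to Hall's own.)
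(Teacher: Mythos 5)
Your proof is correct, but it is not the argument the paper gives: what you have written is the Halmos--Vaughan proof, whereas the paper sketches Hall's original 1935 argument. Hall's proof rests on a lemma about \emph{forced} representatives --- if $P$ is the set of indices $i$ such that $a_i$ lies in every SDR, then $\bigcup_{i\in P}T_i=\{a_i:i\in P\}$ --- and the induction goes from $n-1$ sets to $n$: one takes an SDR for $(T_1,\ldots,T_{n-1})$ and shows that if it cannot be extended, then $T_n$ together with the sets indexed by $P^*$ violates Hall's condition. Your route instead splits on whether some proper subfamily is \emph{tight} (union of size exactly $k$): if not, any element of $T_n$ can be chosen greedily; if so, you handle the tight subfamily and the residual family separately, each inheriting Hall's condition. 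Both arguments are inductive, but yours needs strong induction (you invoke the hypothesis for families of sizes $k$ and $n-k$) while being entirely self-contained, avoiding the auxiliary lemma that the paper states without proof; Hall's version yields as a by-product the structural information about which representatives are forced. Your parenthetical remark that your proof ``is close to Hall's own'' is therefore slightly off --- it is the later simplification due to Halmos and Vaughan --- but every step you give checks out: the verification that the residual sets $T_j'=T_j\setminus U$ satisfy Hall's condition is exactly the right computation, and the case with no tight subfamily is handled correctly since the surplus of $1$ absorbs the deletion of the single element $a_n$.
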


\begin{proof}
The necessity of the condition is clear. I sketch Hall's proof of the
sufficiency. It depends on a lemma asserting that,
with the hypotheses of the theorem, if $(a_1,\ldots,a_n)$ is an SDR and
\[P=\{i:1\le i\le n, a_i\hbox{ lies in every SDR}\},\]
then the union of the sets $T_i$ for $i\in P$ is $\{a_i:i\in P\}$.

Given this, the proof is by induction on $n$. The induction clearly starts
at $n=1$.

Suppose that the hypotheses of the theorem hold, and that the theorem is
true for $n-1$ sets. By induction, there is an SDR $(a_1,\ldots,a_{n-1})$
for $(T_1,\ldots,T_{n-1})$. Thus, the theorem will hold unless $T_n$ is
contained in every SDR for $(T_1,\ldots,T_{n-1})$. Let
\[P^*=\{i:1\le i\le n-1, a_i\hbox{ lies in every SDR for }(T_1,\ldots,T_{n-1})\},\]
and $R^*=\{a_i:i\in P^*\}$. If $T_n\subseteq R^*$, then the sets $T_i$ for
$i\in P^*$ or $i=n$ are $|P^*|+1$ in number but by the lemma contain only
$|P^*|$ elements among them, contrary to hypothesis.
\end{proof}

To deduce the result about cosets, we let $H$ be a subgroup of $G$ of index $n$.
Since the intersection of the conjugates of $H$ is a normal subgroup of finite
index in $G$, by taking the quotient we may assume that $G$ is finite.
Let $L_1,\ldots,L_n$ be the left cosets and $R_1,\ldots,R_n$ the right cosets
of $H$ in $G$; for $i=1,\ldots,n$, let
\[T_i=\{j:L_i\cap R_j\ne\emptyset\}.\]
The union of $k$ left cosets contains $k|H|$ elements, and so cannot be
contained in fewer than $k$ right cosets. So the union of $k$ of the sets $T_i$
has cardinality at least $k$, and Hall's theorem gives a SDR $(a_1,\ldots,a_n)$.
Choosing $g_i\in L_i\cap R_{a_i}$ for $i=1,\ldots,n$ gives the required coset
representatives.

\medskip

The name ``Hall's marriage theorem'' comes from the following interpretation.
Let $\{b_1,\ldots,b_n\}$ be a set of boys, and $S$ a set of girls; let $A_i$
be the set of girls who know $b_i$. Then a necessary and sufficient condition
for it to be possible to marry each boy to a girl he knows is that any $k$
boys know between them at least $k$ girls, for all $k$.

\section{Precursors and equivalents}

In fact, other mathematicians stated equivalent results before Hall.
What  follows is a very brief account; Hall's version became the most
influential, but I leave to a more detailed historical analysis the question
of why this happened. However, the legacy of this activity is shown by the
wide variety of proofs of the theorem which appear in textbooks and expository
accounts.

First, after some definitions, I state some results which are in a sense
equivalent to Hall's theorem.

A \emph{bipartition} of a graph is a partition of the vertex set into two
parts such that each edge is incident with one vertex in each part; a graph
is \emph{bipartite} if it has a bipartition. A \emph{matching} is a set of
pairwise disjoint edges; it is \emph{perfect} if every vertex is incident with
an edge of the matching. A set of vertices is \emph{independent} if it
contains no edge. A \emph{vertex cover} is a set of vertices meeting every
edge.

\begin{theorem}\label{t:equiv}
\begin{enumerate}
\item 
Let $G$ be a graph with bipartition $\{A,B\}$. Suppose that, for
every subset $X$ of $A$, there are at least $|X|$ vertices in $B$ which
have a neighbour in $A$. Then $G$ has a matching which covers every vertex
in $A$.
\item 
Let $M$ be an 
$n\times m$ matrix. A necessary and sufficient condition for the existence
of an injective map $f$ from $\{1,\ldots,n\}$ to $\{1,\ldots,m\}$ such that
$M_{i,f(i)}\ne 0$ for all $i$ is that any $k$ rows together have non-zero
elements in at least $k$ columns, for $0\le k\le n$.
\item
Let $(T_1,\ldots,T_n)$ be a family of subsets of $S$. Suppose that there is a
positive integer $d$ such that, for $1\le k\le n$, any $k$ sets from the family
contain between them at least $k-d$ elements. Then there is a partial SDR
of size $n-d$; that is, elements $a_{i_1},\ldots,a_{i_{n-d}}$ in $S$ such that
$a_j\in T_j$ for $j\in\{i_1,\ldots,i_{n-d}\}$ and $a_j\ne a_k$ for $j\ne k$.
\end{enumerate}
\end{theorem}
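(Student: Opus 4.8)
The plan is to read off all three statements from Hall's Theorem, since each is essentially a translation of it into a different language.

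For (i), associate to each vertex $a\in A$ the set $T_a\subseteq B$ of its neighbours. The hypothesis --- that every $X\subseteq A$ has at least $|X|$ neighbours in $B$ --- says precisely that the union of the sets $T_a$ for $a\in X$ has at least $|X|$ elements, which is the Hall condition for the family $(T_a)_{a\in A}$. An SDR $(x_a)_{a\in A}$ for this family is a choice of distinct vertices $x_a\in B$ with $x_a$ adjacent to $a$, so the edges $\{a,x_a\}$ form a matching covering every vertex of $A$. For (ii), put $T_i=\{j: M_{i,j}\ne 0\}$; then ``any $k$ rows have non-zero entries in at least $k$ columns'' is exactly the Hall condition for $(T_1,\ldots,T_n)$, and an injective $f$ with $M_{i,f(i)}\ne 0$ for all $i$ is exactly an SDR for this family, so sufficiency is Hall's Theorem. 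Necessity is the easy direction: if such an $f$ exists then, for any $k$ rows, the corresponding values of $f$ are $k$ distinct columns in which those rows have non-zero entries.

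For (iii) the idea is to pad the ground set so that Hall's Theorem applies. We may assume $1\le d< n$, since for $d\ge n$ the empty family of representatives already works. Adjoin to $S$ a set $\{z_1,\ldots,z_d\}$ of $d$ new symbols, and set $T_i'=T_i\cup\{z_1,\ldots,z_d\}$, a subset of the finite set $S\cup\{z_1,\ldots,z_d\}$. For any non-empty set $I$ of $k$ indices, $|\bigcup_{i\in I}T_i'|=|\bigcup_{i\in I}T_i|+d\ge (k-d)+d=k$, and the case $k=0$ is trivial, so $(T_1',\ldots,T_n')$ satisfies the Hall condition and hence has an SDR $(a_1',\ldots,a_n')$. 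The $a_i'$ are distinct, so at most $d$ of them are among $z_1,\ldots,z_d$; deleting those indices leaves at least $n-d$ indices $i$ with $a_i'\in T_i$, and these distinct elements of $S$ form a partial SDR of size at least $n-d$ (hence one of size exactly $n-d$).

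There is no real obstacle here: the only steps needing a little care are the verification in (iii) that the padded family meets the Hall condition for every $k$ in range --- including the degenerate cases $k<d$, where the hypothesis is vacuously true, and $k=0$ --- and the observation that a full SDR of the padded family can use at most $d$ of the dummy symbols. The substance has already been spent in establishing Hall's Theorem itself; each of (i)--(iii) could in turn be used to recover it, which is the sense in which these statements are ``equivalent''.
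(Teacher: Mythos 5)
Your proof is correct and follows essentially the same route as the paper: parts (i) and (ii) are the same direct translations (neighbour sets for vertices of $A$, column sets for rows of $M$), and part (iii) uses the identical device of adjoining $d$ dummy elements to every set, applying Hall's theorem, and discarding the at most $d$ indices represented by dummies. The extra care you take with the degenerate cases ($k\le d$, $d\ge n$) is a welcome addition but does not change the argument.
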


\begin{proof}
Parts (i) and (ii) are relatively straightforward variants of Hall's theorem.
For (i), for each $a\in A$, let $T_a$ be  the set of its neighbours in $B$;
the hypotheses in the graph imply Hall's hypotheses, and the conclusion is
then equivalent to Hall's. For (ii), for each row index $i$ of the matrix,
we take $T_i$ to be the set of columns in which the $i$th row has non-zero
entries.

Part (iii) is the \emph{defect form} of Hall's theorem. Though apparently more
general, it is easily proved from the original form, as follows. Take a set
$D$ of $d$ ``dummy'' elements not lying in any of the sets $T_i$, and 
let $T_i^*=T_i\cup D$. The sets $T_i^*$ satisfy Hall's condition, and so have
an SDR; we simply discard the sets whose representative is one of the dummies.
\end{proof}

Part (ii) of the theorem was proved by K\"onig~\cite{konig} and
Egerv\'ary~\cite{egervary} in 1931. Indeed, K\"onig proved the following,
which is also equivalent to Hall's theorem:

\begin{theorem}
The maximum size of a matching in a bipartite graph is equal to the minimum
size of a vertex cover.
\end{theorem}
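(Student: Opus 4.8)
The plan is to derive König's theorem from the defect form of Hall's theorem, namely part~(iii) of Theorem~\ref{t:equiv}. Let $G$ have bipartition $\{A,B\}$, write $\nu$ for the maximum size of a matching and $\tau$ for the minimum size of a vertex cover, and for $X\subseteq A$ let $N(X)$ denote the set of vertices of $B$ adjacent to some vertex of $X$. First I would dispose of the easy inequality $\nu\le\tau$: if $M$ is a matching and $C$ a vertex cover, then each edge of $M$ has at least one endpoint in $C$, and since the edges of $M$ are pairwise disjoint these endpoints are distinct, so $|C|\ge|M|$.

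For the reverse inequality I would introduce the \emph{deficiency} $\delta=\max\{\,|X|-|N(X)|:X\subseteq A\,\}$, which is nonnegative since $X=\emptyset$ is allowed. Applying part~(iii) to the family $(T_a)_{a\in A}$ with $T_a$ the neighbourhood of $a$ and with $d=\delta$: the hypothesis ``any $k$ of the sets contain at least $k-\delta$ elements'' holds precisely by the definition of $\delta$, so there is a partial SDR of size $|A|-\delta$, i.e.\ a matching saturating $|A|-\delta$ vertices of $A$; hence $\nu\ge|A|-\delta$. Conversely, fixing $X_0\subseteq A$ with $|X_0|-|N(X_0)|=\delta$, any matching saturates at most $|N(X_0)|$ vertices of $X_0$ and so leaves at least $\delta$ vertices of $A$ unsaturated, giving $\nu\le|A|-\delta$. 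Therefore $\nu=|A|-\delta$.

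It then remains to exhibit a vertex cover of size exactly $\nu$. I would take $C=(A\setminus X_0)\cup N(X_0)$. This covers every edge $ab$ (with $a\in A$, $b\in B$): either $a\notin X_0$, so $a\in C$, or $a\in X_0$, whence $b\in N(X_0)\subseteq C$. Moreover $|C|=|A|-|X_0|+|N(X_0)|=|A|-\delta=\nu$, so $\tau\le|C|=\nu$. Combined with $\nu\le\tau$ from the first paragraph, this yields $\nu=\tau$.

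I do not expect a serious obstacle; the part needing care is the bookkeeping around the deficiency — verifying that the choice $d=\delta$ makes the hypothesis of the defect form hold exactly, and that the two bounds $\nu\le|A|-\delta$ and $\nu\ge|A|-\delta$ dovetail — together with checking that the cover $C$ built from a deficiency-critical set $X_0$ is genuinely a vertex cover and not merely a set of the right cardinality. The only genuinely non-trivial direction of the equivalence is $\tau\le\nu$, and its crux is extracting an explicit optimal cover from such an $X_0$.
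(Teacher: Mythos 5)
Your proof is correct and follows essentially the same route as the paper: both derive K\"onig's theorem from the defect form of Hall's theorem (Theorem~\ref{t:equiv}(iii)), and both rest on the same key observation that $(A\setminus X)\cup N(X)$ is a vertex cover of size $|A|-(|X|-|N(X)|)$. The only difference is organisational --- the paper anchors the defect parameter to the size of a minimum cover and verifies the defect condition by contradiction, whereas you define the deficiency $\delta$ directly and exhibit an explicit optimal cover from a deficiency-critical set $X_0$.
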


\begin{proof}
Here is the proof of Hall's theorem (in the form given in
Theorem~\ref{t:equiv}(i)) from K\"onig's. Take a bipartite graph satisfying
the conditions of (ii), and suppose that a vertex cover $C$ contains $t$
vertices in $A$. Then the remaining $|A|-t$ vertices have at least $|A|-t$
neighbours in $B$, all of which must be included in $C$; so $|C|\ge|A|$.
Now K\"onig's theorem shows that there is a matching of size $|A|$, which
must cover all the vertices in $A$, as required.

Here is the proof of K\"onig's theorem from the defect form of Hall's
theorem (Theorem~\ref{t:equiv}(iii) above). Suppose that a minimal vertex cover
contains $k$ vertices in $A$ and $l$ in $B$. Since $A$ is a vertex cover, we
have $k+l\le|A|$; put $d=|A|-(k+l)$. Now take a set $A'$ of $m$ points in $A$,
and suppose that its neighbour set $B'$ in $B$ has $|B'|<m-d$; then
$(A\setminus A')\cup B'$ is a vertex cover with size $|A|-m+|B'|<|A|-d=k+l$,
contrary to assumption. Thus $A'$ has at least $m-d$ neighbours in $B$. By the
defect form of Hall's theorem, there is a matching of size $a-d=k+l$.
\end{proof}

In 1927, Karl Menger~\cite{menger} proved:

\begin{theorem}
In a graph, the maximum number of edge-disjoint paths between two vertices
is equal to the minimum size of an edge cut (a set of edges whose removal
separates these two vertices.
\end{theorem}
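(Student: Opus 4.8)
The plan is to establish the two inequalities separately, the nontrivial one by an augmenting-path argument of max-flow--min-cut type. Write $V$ for the vertex set and fix the two vertices $s$ and $t$. The inequality ``max number of edge-disjoint $s$--$t$ paths $\le$ minimum size of an $s$--$t$ edge cut'' is immediate: every $s$--$t$ path uses at least one edge of every separating set of edges, and distinct members of an edge-disjoint family of paths use disjoint sets of edges, so a family of $p$ edge-disjoint paths forces every edge cut to have at least $p$ edges.

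For the reverse inequality, take a \emph{maximum} family $\mathcal{P}$ of edge-disjoint $s$--$t$ paths, say $|\mathcal{P}|=p$, and orient every edge lying on a path of $\mathcal{P}$ in the direction that path traverses it (edges on no path of $\mathcal{P}$ remaining unoriented). Call a walk starting at $s$ \emph{augmenting} if it traverses each unoriented edge in either direction and each oriented edge only against its orientation, and let $R$ be the set of vertices reachable from $s$ by augmenting walks. I claim $t\notin R$. Indeed, an augmenting $s$--$t$ walk can be used in the standard way to rearrange $\mathcal{P}$ into $p+1$ edge-disjoint paths: form the symmetric difference of the edge set used by $\mathcal{P}$ with the edge set of the walk, combining orientations appropriately; this yields a $\{0,1\}$-valued orientation in which every vertex other than $s$ and $t$ has equal in- and out-degree while $s$ has net out-degree $p+1$, and such an orientation decomposes into $p+1$ edge-disjoint $s$--$t$ paths together with some directed cycles, which may be discarded. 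This contradicts the maximality of $\mathcal{P}$.

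Now let $F$ be the set of edges with exactly one endpoint in $R$. Since $s\in R$ and $t\notin R$, deleting $F$ disconnects $s$ from $t$, so $F$ is an $s$--$t$ edge cut. No edge of $F$ is unoriented (an unoriented edge would place both its endpoints in $R$), and every oriented edge of $F$ must point out of $R$ (otherwise it could be traversed backwards into $R$). Consequently each path of $\mathcal{P}$, running from $s\in R$ to $t\notin R$, can cross $F$ only from $R$ to $V\setminus R$, hence crosses it exactly once; and each edge of $F$, being oriented, lies on some (hence, by edge-disjointness, exactly one) path of $\mathcal{P}$. This sets up a bijection between $\mathcal{P}$ and $F$, so $|F|=p$, giving an $s$--$t$ edge cut of size $p$ and completing the argument.

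The one step that needs genuine care --- the main obstacle --- is the augmentation claim: that an augmenting $s$--$t$ walk really does permit replacing $\mathcal{P}$ by a family of $p+1$ edge-disjoint paths. This is precisely the integrality-and-path-decomposition content of the max-flow--min-cut theorem, and for multigraphs a little care is needed in extracting genuine paths from the combined orientation and in discarding the cycles. An alternative route avoiding flows is to pass to the line graph of $G$, adjoin two new vertices adjacent respectively to the edges at $s$ and at $t$, and observe that edge-disjoint $s$--$t$ paths become internally disjoint paths while edge cuts become vertex cuts; but this only reduces the problem to the \emph{vertex} form of Menger's theorem, which would itself then have to be proved (for instance from K\"onig's theorem by a similar argument), so I would prefer the direct flow-based proof above.
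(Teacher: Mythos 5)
Your proof is correct, but note that the paper itself offers no proof of this statement at all: Menger's theorem is quoted with a citation to Menger's 1927 paper, and the surrounding text only derives Hall's theorem \emph{from} it. So your argument is genuinely supplementary rather than a variant of anything in the text. What you give is the standard augmenting-path (unit-capacity max-flow) proof, and it is sound: the easy inequality is right; the residual-reachability set $R$ is well defined; and your analysis of the cut $F$ of edges leaving $R$ (no unoriented edge can lie in $F$, every oriented edge of $F$ points out of $R$, hence each path of $\mathcal{P}$ crosses $F$ exactly once and each edge of $F$ is the unique crossing edge of the unique path containing it) correctly yields $|F|=p$. The one step you flag as delicate --- that an augmenting $s$--$t$ walk yields $p+1$ edge-disjoint paths --- does go through, and is cleanest if you replace ``walk'' by a \emph{simple path in the residual digraph} (which exists whenever $t$ is reachable, and which cannot use both residual arcs arising from one undirected edge, since that would repeat a vertex); then the net-degree bookkeeping you describe gives a $\{0,1\}$-orientation with excess $p+1$ at $s$, and flow decomposition extracts $p+1$ edge-disjoint $s$--$t$ paths plus discardable cycles. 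Your closing remark about the line-graph reduction is also apt: it would only trade the edge form for the vertex form, so the direct argument is the right choice here.
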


Hall's theorem (in the bipartite graph formulation of Theorem~\ref{t:equiv}(i))
can be proved from Menger's: add two new vertices $a$ and $b$ where $a$ is
joined to the vertices in $A$ and $b$ to the vertices in $B$, and applying
Menger's theorem to the vertices $a$ and $b$.

Menger's theorem also has a version for edge-disjoint paths and edge cuts.
From this, it is a small step to the \emph{Max-flow Min-cut Theorem},
for edge-weighted graphs, and so to the \emph{Duality Theorem} of linear
programming.

Other authors who proved similar or equivalent results before Hall include
van der Waerden and Sperner. So it might be said that this was a well-studied
topic at the time.

\section{Dilworth's Theorem and perfect graphs}

In 1950, Dilworth~\cite{dilworth} proved the following theorem. A \emph{chain}
in a partially ordered set is a set of elements any two of which are comparable
(that is, a totally ordered subset); an \emph{antichain} is a set in which any
pair are incomparable.

\begin{theorem}
If the largest antichain in a finite partially ordered set $P$ has cardinality
$m$, then $P$ is the union of $m$ chains.
\end{theorem}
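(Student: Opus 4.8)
The plan is to deduce Dilworth's theorem from K\"onig's theorem (the matching--vertex-cover duality stated above), which itself rests on Hall's theorem. First I would build a bipartite graph $\Gamma$ from $P$: take two disjoint copies $P'=\{x':x\in P\}$ and $P''=\{x'':x\in P\}$ of the ground set as the two sides, and join $x'$ to $y''$ precisely when $x<y$ strictly in $P$. The crucial observation is a dictionary between matchings in $\Gamma$ and partitions of $P$ into chains: a matching $M$ determines a directed graph on $P$ with an arc $x\to y$ for each edge $x'y''\in M$; since $M$ is a matching every vertex has in-degree and out-degree at most $1$, and since each arc strictly increases the partial order there is no directed cycle, so this directed graph is a disjoint union of directed paths, each of which is a chain of $P$. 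These chains partition $P$, and their number is $|P|-|M|$. Hence a matching of size $k$ yields a partition of $P$ into $|P|-k$ chains.

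So it suffices to produce a matching of size $|P|-m$ in $\Gamma$. By K\"onig's theorem this amounts to showing that every vertex cover of $\Gamma$ has size at least $|P|-m$. Given a vertex cover $C$, set $A=\{x\in P:x'\in C\}$ and $B=\{x\in P:x''\in C\}$; the covering property says that for every strict relation $x<y$ we have $x\in A$ or $y\in B$. Therefore $Q=P\setminus(A\cup B)$ contains no two comparable elements, i.e.\ it is an antichain, so $|Q|\le m$; this gives $|C|=|A|+|B|\ge|A\cup B|=|P|-|Q|\ge|P|-m$. By K\"onig's theorem $\Gamma$ then has a matching $M$ with $|M|\ge|P|-m$, and the dictionary above turns it into a partition of $P$ into $|P|-|M|\le m$ chains. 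Padding with empty chains (or observing that an $m$-element antichain already forces at least $m$ chains) gives the stated decomposition into exactly $m$ chains.

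The routine point I would not belabour is the passage from ``at most $m$'' to ``exactly $m$'' chains. The step needing genuine care --- the heart of the argument --- is the matchings-to-chains correspondence: one must use antisymmetry of the order to rule out directed cycles (so that the associated digraph decomposes into paths rather than something more complicated), and one must keep straight that $\Gamma$ is built from the \emph{strict} relation, since using $x\le y$ would make $x'\mapsto x''$ a perfect matching and collapse the count to zero. The other place demanding attention is the identification, via the complement construction, of minimum vertex covers of $\Gamma$ with maximum antichains of $P$; here one has to be careful about which side of $\Gamma$ each vertex of the cover lies on.
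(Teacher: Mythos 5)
Your argument is correct, and it is worth noting that the paper itself offers no proof of Dilworth's theorem: it simply states the result with a citation, derives Hall's theorem from it, and then remarks without proof that ``it is possible to go in the other direction and prove Dilworth's theorem from Hall's.'' Your proposal is a correct and complete realization of exactly that reverse direction, routed through K\"onig's theorem (which the paper has already shown to be equivalent to Hall's). The construction you use --- the split bipartite graph on two copies of $P$ with $x'$ adjacent to $y''$ when $x<y$, the bijection between matchings of size $k$ and chain partitions into $|P|-k$ parts, and the identification of complements of vertex covers with antichains --- is the classical Fulkerson-style derivation, and each step checks out: the in/out-degree bound plus acyclicity correctly forces the auxiliary digraph to decompose into directed paths, the count $|P|-|M|$ of components is right, and the inequality $|C|\ge|P|-|Q|\ge|P|-m$ for any vertex cover $C$ is exactly what K\"onig's theorem needs. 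Your closing observation that an $m$-element antichain meets each chain at most once, so at least $m$ chains are always required, correctly upgrades ``at most $m$'' to ``exactly $m$.'' What your approach buys, relative to the paper's bare citation, is a self-contained demonstration that Dilworth's theorem sits inside the circle of equivalences (Hall, K\"onig, Dilworth) that the paper is describing, rather than being merely a stronger-looking statement that happens to imply Hall's theorem.
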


This theorem implies Hall's as follows. With the hypotheses of Hall's theorem,
we partially order $S\cup\{1,\ldots,n\}$ by the rule that, for $a\in S$ and
$i\in\{1,\ldots,n\}$, we put $a<i$ if and only if $a\in T_i$. It is clear that
the largest antichain has size $|S|$, and so we can find $|S|$ chains whose
union is $S\cup\{1,\ldots,n\}$: these must consist of $n$ pairs $(a_i,i)$ where
$(a_1,\ldots,a_n)$ is a SDR, together with the $|S|-n$ unused singletons in $S$.
In fact it is possible to go in the other direction and prove Dilworth's
theorem from Hall's.

The ``dual'' of Dilworth's theorem, stating that if the largest chain has
cardinality $n$, then $P$ is the union of $n$ antichains, is much simpler:
the maximal elements of $P$ form an antichain, and removing them reduces the
size of the largest chain by $1$, so induction finishes the job.

In a poset $P$, the \emph{comparability graph} has as vertices the elements
of $P$, two elements joined if they are comparable; its complement is the
\emph{incomparability graph}.

A finite graph is \emph{perfect} if every induced subgraph has clique number
equal to chromatic number. (Clique number is the size of the largest complete
subgraph, while chromatic number is the smallest number of edgeless induced
subgraphs required to cover the vertices.) Dilworth's theorem asserts that
the incomparability graph of a poset is perfect (since every induced subgraph
of an incomparability graph is an incomparability graph); the easier dual 
asserts that the comparability graph is also perfect.

This led Claude Berge~\cite{berge} to two conjectures, the weak and strong
perfect graph conjectures:
\begin{itemize}
\item The weak conjecture: The complement of a perfect graph is perfect.
\item The strong conjecture: A graph is perfect if and only if it has no
induced subgraph which is a cycle of odd length greater than $3$ or the
complement of one.
\end{itemize}
The weak conjecture was proved by Lov\'asz~\cite{lovasz} in 1972, while the
strong conjecture had to wait for the work of Chudnovsky, Robertson, Seymour
and Thomas~\cite{crst} in 2006.

\section{Matroids}

Matroids were introduced by Hassler Whitney~\cite{whitney}, and independently
by Takeo Nakasawa~\cite{nakasawa}, in 1935, as a model for ``independence''
(which could be linear independence in a vector space, algebraic independence
over a subfield,  acyclicity in a graph, etc.) The definition is simple.
A \emph{matroid} consists of a ground set $E$ together with a non-empty
collection $\mathcal{I}$ of subsets of $E$ called \emph{independent sets},
which is closed downwards (that is, a subset of an independent set is
independent), and has the \emph{exchange property}:
\begin{quote}
If $A,B\in\mathcal{I}$ and $|B|>|A|$, then there exists $b\in B\setminus A$
such that $A\cup\{b\}\in\mathcal{I}$.
\end{quote}
The exchange property implies that any two maximal independent subsets of a
set $X\subseteq E$ have the same cardinality, which is called the \emph{rank}
of $X$, denoted $r(X)$.

Matroids play a central role in combinatorics, with applications ranging from
model theory to stability of frameworks.

Richard Rado generalised Hall's theorem to the setting of matroids. Let
$(T_1,\ldots,T_n)$ be a collection of subsets of $E$. A tuple $(a_1,\ldots,a_n)$
of distinct elements of $E$ is a \emph{system of independent representatives}
for the family if $a_i\in T_i$ for $1\le i\le n$ and
$\{a_1,\ldots,a_n\}\in\mathcal{I}$. Rado~\cite{rado} proved:

\begin{theorem}
Let $(T_1,\ldots,T_n)$ be a collection of subsets of the ground set of a
matroid $(E,\mathcal{I})$. Then the collection has a system of independent
representatives if and only if, for $0\le k\le n$ and any $k$ elements
$i_1,\ldots,i_k\in\{1,\ldots,n\}$, we have
\[r\left(\bigcup_{j=1}^kT_{i_j}\right)\ge k.\]
\end{theorem}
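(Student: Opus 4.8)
The plan is to prove Rado's theorem by induction on the ground set, mimicking the classical reduction used to prove Hall's theorem itself: if some set $T_i$ has more than one element, we try to shrink it; the only obstruction to shrinking is a ``critical'' subfamily, and analysing that obstruction will let us split the problem into two smaller instances. The necessity of the rank condition is immediate: if $(a_1,\ldots,a_n)$ is a system of independent representatives, then for any indices $i_1,\ldots,i_k$ the set $\{a_{i_1},\ldots,a_{i_k}\}$ is independent and contained in $\bigcup_{j=1}^k T_{i_j}$, so the union has rank at least $k$.

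For sufficiency, first I would reduce to the case where $E = \bigcup_{i=1}^n T_i$ (elements outside this union are irrelevant) and then argue by induction on $\sum_{i=1}^n (|T_i| - 1)$, or equivalently on $|E|$ with the sets fixed. If every $T_i$ is a singleton, say $T_i = \{a_i\}$, then the rank hypothesis applied to all $n$ sets forces $r(\{a_1,\ldots,a_n\}) \ge n$, hence $\{a_1,\ldots,a_n\}$ is independent (it has $n$ elements and rank $n$, using that rank never exceeds cardinality), and in particular the $a_i$ are distinct; this is the base case. Otherwise some $T_\ell$ contains two distinct elements $x,y$. Call a subfamily indexed by $I \subseteq \{1,\ldots,n\}$ \emph{critical} if $r\bigl(\bigcup_{i\in I} T_i\bigr) = |I|$. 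The idea is: try replacing $T_\ell$ by $T_\ell \setminus \{x\}$; if the rank condition still holds, we are done by induction. If not, there is a set $I \ni \ell$ with $r\bigl((\bigcup_{i\in I}T_i) \setminus \{x\}\bigr) < |I|$; combined with the original hypothesis this forces $r\bigl(\bigcup_{i\in I}T_i\bigr) = |I|$, so $I$ is critical, and similarly removing $y$ instead produces a critical set $J \ni \ell$.

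The heart of the argument is then to exploit submodularity of the matroid rank function, $r(U) + r(V) \ge r(U\cup V) + r(U\cap V)$, to show that two critical sets cannot both block every attempt to shrink $T_\ell$. Writing $U = \bigcup_{i\in I}T_i$ and $V = \bigcup_{i\in J}T_i$, submodularity gives
\[
|I| + |J| = r(U) + r(V) \ge r(U\cup V) + r(U\cap V) \ge |I\cup J| + r(U\cap V).
\]
Since $\bigcup_{i\in I\cap J}T_i \subseteq U \cap V$ and $|I| + |J| = |I\cup J| + |I\cap J|$, we deduce $r(U\cap V) \le |I\cap J|$, and the rank hypothesis gives equality, so $I\cap J$ is also critical, and moreover $\bigcup_{i\in I\cap J}T_i = U\cap V$ up to rank (every maximal independent subset of $U\cap V$ lies in $\bigcup_{i\in I\cap J}T_i$). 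But $\ell \in I\cap J$, and $x \in U$ while $y \in V$; one then argues that $x$ and $y$ cannot both be forced into $U\cap V$ in a way consistent with the criticality of $I\cap J$ together with the failed deletions — more precisely, if deleting $x$ from $T_\ell$ breaks the condition via $I$ and deleting $y$ breaks it via $J$, the same computation applied with the point deleted shows $I\cap J$ would need rank strictly below $|I\cap J|$ after deleting one of $x,y$, which is impossible since a single-element deletion drops rank by at most one and $I\cap J$ already has rank exactly $|I\cap J|$ while still satisfying the hypothesis with the deletion — a contradiction. Hence at least one of the two deletions preserves the rank condition, the induction goes through, and we are done.

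The main obstacle is the last step: making rigorous the claim that some single deletion from $T_\ell$ preserves the Hall--Rado condition. The clean way to handle it is the standard dichotomy: either $T_\ell \setminus \{x\}$ works, or else $\ell$ lies in a critical set $I$, and then one restricts attention to the smaller family where the ``critical part'' is handled separately. Concretely, if $I$ is critical with $\ell \in I$, choose (by the exchange property) a basis of $\bigcup_{i\in I}T_i$ of size $|I|$, recursively find a system of independent representatives for $(T_i)_{i\in I}$ within the contracted matroid, then contract by that basis and verify the rank condition persists for the remaining sets $(T_i)_{i\notin I}$ — this uses that contraction of a matroid is a matroid and that $r(X/B) = r(X\cup B) - r(B)$, together with the hypothesis to check $r\bigl(\bigcup_{i\in K}T_i \,/\, B\bigr) \ge |K|$ for $K \subseteq \{1,\ldots,n\}\setminus I$. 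Getting the bookkeeping of these contractions exactly right — in particular ensuring distinctness of all the chosen representatives across the two pieces — is the part that requires care, but it is entirely parallel to the corresponding step in the inductive proof of Hall's theorem sketched earlier in the paper, with ``number of elements'' replaced throughout by ``rank''.
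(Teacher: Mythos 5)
The paper itself does not prove Rado's theorem: it is stated with a citation to Rado's 1942 paper and no argument is given, so there is no in-text proof to compare yours against. Your overall strategy --- induction on $\sum_i(|T_i|-1)$, with the all-singletons case handled by the $k=n$ instance of the rank condition, and the inductive step showing that some element can be deleted from a non-singleton $T_\ell$ without destroying the condition --- is the standard route to Rado's theorem and is sound in outline. The necessity direction and the base case are correct as written.

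The genuine gap is at the key step. You apply submodularity to $U=\bigcup_{i\in I}T_i$ and $V=\bigcup_{i\in J}T_i$, the \emph{unmodified} unions over the two critical index sets, and all this yields is that $I\cap J$ is again critical --- which is not a contradiction; the subsequent sentence trying to extract one (``$I\cap J$ would need rank strictly below $|I\cap J|$ after deleting one of $x,y$ \dots\ while still satisfying the hypothesis with the deletion'') assumes exactly what is in dispute. The correct move is to apply submodularity to the \emph{deleted} unions
\[
U'=\bigcup_{i\in I\setminus\{\ell\}}T_i\cup(T_\ell\setminus\{x\}),\qquad
V'=\bigcup_{i\in J\setminus\{\ell\}}T_i\cup(T_\ell\setminus\{y\}).
\]
If both deletions fail, then $r(U')\le|I|-1$ and $r(V')\le|J|-1$. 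Since $x\ne y$ we have $(T_\ell\setminus\{x\})\cup(T_\ell\setminus\{y\})=T_\ell$, so $U'\cup V'\supseteq\bigcup_{i\in I\cup J}T_i$ and hence $r(U'\cup V')\ge|I\cup J|$; also $U'\cap V'\supseteq\bigcup_{i\in(I\cap J)\setminus\{\ell\}}T_i$, so $r(U'\cap V')\ge|I\cap J|-1$. Submodularity then gives $|I|+|J|-2\ge|I\cup J|+|I\cap J|-1=|I|+|J|-1$, the desired contradiction, in one line. (Your alternative ``clean way'' via contracting a basis of a critical subfamily is a viable and genuinely different proof, parallel to the Halmos--Vaughan argument for Hall's theorem, but as you acknowledge you have not carried out its bookkeeping; so as it stands neither branch of your proposal actually closes the argument.)
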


Dominic Welsh~\cite{welsh} showed that many important results in matroid theory,
including Jack Edmonds' \emph{matroid union theorem}, follow from Rado's
theorem.

\section{Counting SDRs}

We begin this section with an important special case of Hall's theorem.

\begin{theorem}\label{t:reg}
Let $(T_1,\ldots,T_n)$ be a family of subsets of $\{1,\ldots,n\}$. Suppose that
there is a positive integer $r$ such that each set $T_i$ has cardinality $r$,
and each point $j\in\{1,\ldots,n\}$ lies in exactly $r$ of these sets. Then
the family has an SDR.
\end{theorem}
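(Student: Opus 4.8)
The plan is to verify that the hypotheses of Hall's theorem (Theorem~1) hold for the family $(T_1,\ldots,T_n)$, and then simply invoke that theorem. So the entire task reduces to a counting argument: given any $k$ indices $i_1,\ldots,i_k$, I must show that $\left|\bigcup_{j=1}^k T_{i_j}\right| \ge k$.

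First I would count, in two ways, the number of incidences between the chosen sets $T_{i_1},\ldots,T_{i_k}$ and the points they contain. On the one hand, each of the $k$ sets has exactly $r$ elements, so the total number of incidences is exactly $kr$. On the other hand, let $U=\bigcup_{j=1}^k T_{i_j}$; every incidence involves a point of $U$, and each point of $\{1,\ldots,n\}$ lies in \emph{at most} $r$ of \emph{all} the sets $T_1,\ldots,T_n$ (it lies in exactly $r$ of them, hence in at most $r$ of our chosen subcollection). Therefore the number of incidences is at most $r|U|$. Combining, $kr \le r|U|$, and dividing by the positive integer $r$ gives $|U|\ge k$, which is exactly Hall's condition. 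Then Theorem~1 delivers the SDR.

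I do not expect any genuine obstacle here; the result is a clean corollary and the only ``step'' is the double-counting inequality, which is routine. The one thing worth stating carefully is that the regularity hypothesis on the points ($j$ lies in exactly $r$ sets) is used only in the weakened form ``at most $r$ sets,'' and that the divisibility by $r$ is harmless since $r\ge 1$. It is perhaps also worth remarking — though not needed for the proof — that this is the combinatorial heart of the fact that a doubly stochastic matrix (here the $0/1$ incidence matrix scaled by $1/r$, an $r$-regular bipartite graph) contains a perfect matching, a route to the Birkhoff--von Neumann theorem; but for the statement as given, the double count plus Theorem~1 is the whole argument.
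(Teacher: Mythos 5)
Your proof is correct and is essentially identical to the paper's: both count the incidences between the $k$ chosen sets and their elements in two ways, obtaining $kr$ incidences on one side and at most $r|U|$ on the other, so $|U|\ge k$ and Hall's condition holds. Nothing further is needed.
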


\begin{proof}
Choose $k$ of the sets, say $T_{i_1},\ldots,T_{i_k}$. The number of pairs
$(l,j)$ with $l\in\{i_1,\ldots,i_n\}$ and $j\in T_{i_l}$ is $rk$; but each
point $j$ lies in at most $r$ of the sets $\{i_1,\ldots,i_n\}$, and so the
number of such points $j$ is at least $k$. These are precisely the points in
the union of the $k$ sets, so Hall's criterion holds.
\end{proof}

There has been a lot of interest in the question: How many SDRs must such a
family have? I give some results in this section.

A square matrix $M$ is \emph{doubly stochastic} if its elements are all
non-negative and all row and column sums of $M$ are equal to $1$.

\textit{Birkhoff's theorem}~\cite{birkhoff} states:

\begin{theorem}\label{t:hallreg}
The convex hull of the set of $n\times n$ permutation matrices is the set of
$n\times n$ doubly stochastic matrices.
\end{theorem}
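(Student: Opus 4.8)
The plan is to prove the two inclusions separately. The easy inclusion---that every convex combination of permutation matrices is doubly stochastic---is immediate, since each permutation matrix is doubly stochastic and the three defining conditions (non-negative entries, all row sums $1$, all column sums $1$) are each preserved under convex combinations. Hence the convex hull of the $n\times n$ permutation matrices is contained in the set of $n\times n$ doubly stochastic matrices, and the work is in the reverse inclusion.

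For the reverse inclusion I would argue by induction on the number of non-zero entries of a doubly stochastic matrix $M$. The base case is when $M$ has exactly $n$ non-zero entries: since each of the $n$ rows has row sum $1$, each row then has exactly one non-zero entry, necessarily equal to $1$, and likewise for columns, so $M$ is already a permutation matrix. For the inductive step, suppose $M$ has more than $n$ non-zero entries. Put $T_i=\{j:M_{ij}>0\}$ for each row index $i$. These sets satisfy Hall's condition: if rows $i_1,\ldots,i_k$ have all their non-zero entries confined to a set $J$ of columns, then the sum of the entries of $M$ over these rows is $k$, and this quantity is at most the sum of all entries of $M$ in the columns of $J$, which is $|J|$; so $|J|\ge k$, and in particular the union of any $k$ of the $T_i$ has at least $k$ elements. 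By Hall's theorem there is an SDR, that is, a permutation $\sigma$ with $M_{i,\sigma(i)}>0$ for every $i$.

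Let $P_\sigma$ be the associated permutation matrix and set $c=\min_i M_{i,\sigma(i)}$, so $0<c\le 1$; in fact $c<1$, since $c=1$ would force $M_{i,\sigma(i)}=1$ and all other entries of row $i$ to vanish, making $M=P_\sigma$ with only $n$ non-zero entries. Then $M-cP_\sigma$ has non-negative entries, all row and column sums equal to $1-c$, and strictly fewer non-zero entries than $M$, because the position attaining the minimum becomes zero while no new non-zero entries are created. Thus $M'=(1-c)^{-1}(M-cP_\sigma)$ is doubly stochastic with fewer non-zero entries, so by the inductive hypothesis $M'$ is a convex combination of permutation matrices; writing $M=(1-c)M'+cP_\sigma$ then exhibits $M$ as one as well, completing the induction.

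The only step with real content is the appeal to Hall's theorem inside the inductive step; the rest is bookkeeping about non-negativity and row/column sums. The point to watch is that the number of non-zero entries genuinely decreases, which is precisely why one subtracts off the \emph{minimum} value $c$ along the chosen positions rather than an arbitrary positive multiple of $P_\sigma$.
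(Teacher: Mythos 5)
Your proof is correct and follows essentially the same route as the paper: the easy inclusion by preservation of the defining conditions under convex combinations, and the reverse inclusion by induction on the number of non-zero entries, using Hall's theorem (in matrix form) to extract a permutation, subtracting off the minimal multiple of the corresponding permutation matrix, and re-scaling. You supply some details the paper leaves implicit (the verification of Hall's condition by comparing row and column sums, and the observation that $c<1$), but the argument is the same.
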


\begin{proof}
The fact that a convex combination of permutation matrices is doubly stochastic
is a simple calculation. The fact that every doubly stochastic matrix occurs
can be proved using Hall's Theorem.

The proof is by induction on the number of non-zero entries in $M$. The
smallest possible number is $n$, which occurs only for permutation matrices;
so suppose that the result is proved for matrices with fewer non-zero entries
than $M$. By Hall's theorem (in the matrix form given by
Theorem~\ref{t:equiv}(ii)), there is a permutation $\pi$ such that, for
$1\le i\le n$, the $(i,\pi(i))$ entry of $M$ is positive. Let $\mu$ be the
minimum of all such entries, and $P$ the permutation matrix corresponding to
$\pi$. Subtracting $\mu P$ from $M$ and re-scaling, we obtain a doubly
stochastic matrix with fewer non-zero entries, and the induction goes through.
\end{proof}

There is another aspect which is relevant to us. The \emph{permanent} is
a matrix function similar to the determinant but ``without the signs'':
\[\per(M)=\sum_{\pi\in S_n}\prod_{i=1}^nM_{i,\pi(i)}.\]
Although the determinant can be efficiently computed using linear algebra, the
permanent is intractible to compute.

Suppose that $M$ is a square matrix with all entries $0$ and $1$. Then $M$
represents a bipartite graph on vertices $\{r_i,c_i:1\le i\le n\}$, where $r_i$
and $c_j$ are joined if and only if $M_{i,j}=1$. Then a non-zero term in the
permanent corresponds to a matching in the graph, so $\per(M)$ counts the
matchings. So Theorem~\ref{t:equiv}(ii) gives a necessary and 
sufficient condition for the permanent to be non-zero.

The \emph{van der Waerden conjecture} asserted that the permanent of an
$n\times n$ doubly stochastic matrix is at least $n!/n^n$, with equality
if and only if every entry of the matrix is $1/n$. This was proved 
indepently by Egorychev~\cite{egorychev} and Falikman~\cite{falikman} in 1980.
From it we can deduce a lower bound for the number of matchings in the special
case of Theorem~\ref{t:reg}.
Let $M$ be the matrix defined from the graph by the procedure of the last
paragraph. Then $M$ has row and column sums $r$, so $(1/r)M$ is doubly
stochastic. We conclude that $\per(M)\ge (r/n)^nn!$, and this number is a lower
bound for the number of matchings. We will use this in the next section.

\section{Latin squares}

Let $m,n$ be positive integers with $m\le n$. An $m\times n$ \emph{Latin
rectangle} is an $m\times n$ array with entries from an alphabet of size $n$,
such that each letter occurs once in each row and at most once in each column.
If $m=n$, it is a Latin square. Latin squares first arose in connection with
magic squares, and have applications in universal algebra, graph theory,
statistics, and cryptography.

\begin{prop}
A Latin rectangle can be extended to a Latin square by adding extra rows.
\end{prop}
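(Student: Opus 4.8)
The plan is to show that any $m\times n$ Latin rectangle with $m<n$ can have one further row adjoined so as to remain a Latin rectangle; iterating this step $n-m$ times then produces a Latin square, so it suffices to carry out a single extension.

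First I would set up the relevant set system. Given an $m\times n$ Latin rectangle $L$ on the symbol set $\{1,\ldots,n\}$, for each column $j\in\{1,\ldots,n\}$ let $T_j$ be the set of symbols that do \emph{not} yet occur in column $j$. Appending a new row amounts to choosing, for each $j$, a symbol $a_j\in T_j$ with the $a_j$ all distinct; that is, it is possible precisely when the family $(T_1,\ldots,T_n)$ has an SDR, and then $(a_1,\ldots,a_n)$ is the row to add.

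Next I would check that this family meets the hypotheses of Theorem~\ref{t:reg} with $r=n-m$. Each column of $L$ already contains $m$ distinct symbols, one from each row, so $|T_j|=n-m$ for every $j$. Dually, each symbol $s$ appears exactly once in each of the $m$ rows, hence in $m$ distinct columns, so $s$ is missing from exactly $n-m$ columns; equivalently, $s$ lies in exactly $n-m$ of the sets $T_j$. Thus the family is $r$-regular in both the required senses, Theorem~\ref{t:reg} supplies an SDR $(a_1,\ldots,a_n)$, and adjoining it as a new row preserves ``each symbol once per row'' and ``each symbol at most once per column'', giving an $(m+1)\times n$ Latin rectangle. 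Repeating until $m=n$ finishes the argument.

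I expect no real obstacle here: the only mildly delicate point is the symmetric double count showing every symbol is absent from exactly $n-m$ columns, and it is exactly this regularity that lets us invoke Theorem~\ref{t:reg} directly rather than re-verifying Hall's condition by hand.
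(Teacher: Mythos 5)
Your proof is correct and follows essentially the same route as the paper: reduce to adding a single row, let $T_j$ be the symbols missing from column $j$, observe that the family is $(n-m)$-regular in both senses, and invoke Theorem~\ref{t:reg} to obtain the SDR forming the new row. The only difference is that you spell out the double count and the iteration slightly more explicitly than the paper does.
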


Clearly it suffices to show that, if $m<n$, then one more row can be added.
The letters $(a_1,\ldots,a_n)$ in
the added row must be all distinct, and must not occur in the $m$ elements in
their column in the given rectangle; so they must form an SDR for the sets
$T_1,\ldots,T_n$, where $T_i$ is the set of letters not appearing in the $i$th
column. Each set $T_i$ has cardinality $n-m$, and each letter lies in $n-m$
of these sets (those corresponding to columns not containing that letter).
By Theorem~\ref{t:reg}, the SDR exists (and, indeed, there are many SDRs).

\medskip

Using the result of the last section, we can obtain a lower bound for the
number of Latin squares. The number of possible $(m+1)$st rows is at least
$((n-m)/n)^n\,n!$; so the number of Latin squares is at least
\[\prod_{m=0}^{n-1}\left(\frac{n-m}{n}\right)^n(n!)=\frac{(n!)^{2n}}{n^{n^2}}.\]
Using Stirling's formula, this is roughly $(n/\mathrm{e}^2)^{n^2}$, so 
asymptotic in the logarithm to the total number of $n\times n$ arrays over an
alphabet of size $n$.

A variant of Latin squares used in statistics are the designs known as
\emph{Youden squares}~\cite{youden,fisher}. In one formulation, a Youden square
is a Latin rectangle whose columns are the blocks of a symmetric balanced
incomplete-block design on the set of letters. The existence of Youden squares 
is an consequence of Theorem~\ref{t:reg}, and the enumeration results
above give estimates for the number of such designs. Indeed, the design needs
to be equireplicate but balance is not required.

\section{Algorithmic version}

Hall's theorem guarantees the existence of a system of distinct representatives.
But the condition itself appears to involve a lot of checking, since we need
to consider every subset of the index set.

The proof of the Max-Flow Min-Cut theorem by Ford and Fulkerson~\cite{ff} was
used by Dinic~\cite{dinic} and Edmonds and Karp~\cite{ek} to give an
efficient algorithm. In the situation of Hall's theorem, it
starts with a partial SDR, and returns either a larger partial SDR or a
failure of Hall's condition. Each round of the algorithm runs in polynomial
time, and it only has to be repeated at most $n$ times. 

However, Dima Fon-Der-Flaass~\cite{fdf} showed that the problem of finding a 
$2$-dimensional array of distinct representatives is NP-hard. (In this problem
we are given a rectangular array of sets and asked to find representatives
such that the representatives in any row or column of the array are distinct.)

\section{The infinite}

Hall's theorem fails in the infinite case. Marshall Hall Jr.\ (no relation)
gave the following example: take $S$ to be the set of positive integers,
$T_0=S$ and $T_i=\{i\}$ for $i>0$. Each set $T_i$ for $i>0$ must be 
represented by its unique element, leaving no representative for $T_0$.
On the other hand, any $k$ of the sets contain either $k$ or infinitely many
elements between them, depending on whether $T_0$ is included.

Marshall Hall~\cite[p.~51]{hall_m} proved a version of
Hall's Theorem for the infinite case:

\begin{theorem}
Let $(T_i:i\in I)$ be a family of finite subsets of an infinite set $S$.
Then the family has a system of distinct representatives if and only if, for
every natural number $k$, any $k$ of the sets contain at least $k$ elements
between them.
\end{theorem}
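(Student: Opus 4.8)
The plan is to deduce the infinite theorem from the finite one by a compactness argument. The necessity of the condition is immediate, as in the finite case, so the work is entirely in the sufficiency direction.

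First I would set up a topological space in which to apply compactness. For each index $i\in I$, the set $T_i$ is finite, so it is a compact space in the discrete topology; form the product space $X=\prod_{i\in I}T_i$, which is compact by Tychonoff's theorem. A point of $X$ is exactly an assignment of a representative $a_i\in T_i$ to every set, and such a point is an SDR precisely when $a_i\ne a_j$ whenever $i\ne j$. For each pair $i\ne j$, let $C_{i,j}$ be the set of points of $X$ with $a_i\ne a_j$; since the coordinate projections are continuous and the target is discrete, each $C_{i,j}$ is closed (indeed clopen) in $X$. An SDR exists if and only if $\bigcap_{i\ne j}C_{i,j}$ is non-empty. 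By compactness, it suffices to show that this family of closed sets has the finite intersection property: that is, for every finite set $F\subseteq I$ the sets $T_i$, $i\in F$, admit a simultaneous choice of distinct representatives — but that is exactly the finite Hall theorem, whose hypotheses are inherited from the hypotheses on the whole family. Hence the intersection is non-empty and an SDR exists.

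The one point requiring a little care — and the only real obstacle — is the finiteness of the individual sets $T_i$: this is what makes each factor $T_i$ compact and hence lets Tychonoff apply. Marshall Hall Jr.'s counterexample with $T_0=S$ infinite shows that the hypothesis cannot simply be dropped, and the compactness proof makes transparent where it is used. An alternative to invoking Tychonoff directly is to run a König's-lemma style argument, or to use the ultrafilter/ultraproduct formulation of compactness, but the cleanest exposition is the product-space version above; I would present that and remark that the argument is a prototype of the "compactness transfers finite combinatorial theorems to the infinite" method, citing Rado's selection lemma as the general tool if a reference is wanted.

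Finally I would note that no cardinality restriction on the index set $I$ is needed, only that each $T_i$ is finite; and that when all $T_i$ are finite the conclusion can even be strengthened, but for the stated theorem the compactness deduction is complete once the finite intersection property has been checked against the finite theorem.
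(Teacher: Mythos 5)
Your compactness argument is correct and complete. Note, though, that the paper does not actually prove this theorem: it only states it and cites Marshall Hall Jr.'s book, so there is no in-paper proof to compare against. Your route --- Tychonoff applied to $X=\prod_{i\in I}T_i$ with each finite $T_i$ discrete, the clopen sets $C_{i,j}$ cutting out the distinctness conditions, and the finite intersection property supplied by the finite Hall theorem --- is the standard modern ``compactness transfer'' proof and is logically sound. Two small points are worth making explicit in a careful write-up: first, each $T_i$ is non-empty (this is the $k=1$ case of the hypothesis), which is needed both for $X$ to be non-empty and to extend a partial SDR on a finite index set $F$ to a full point of $X$ by choosing arbitrary coordinates outside $F$; second, a finite subfamily of the sets $C_{i,j}$ involves only finitely many indices, and it is to that finite index set that you apply the finite theorem, with the Hall condition inherited by restriction. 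Your observation that the finiteness of each $T_i$ is exactly what makes the factors compact, and that Marshall Hall's example with $T_0=S$ shows it cannot be dropped, correctly locates where the hypothesis is used. The argument does rely on a choice principle (Tychonoff for products of finite discrete spaces, equivalently Rado's selection lemma or the Boolean prime ideal theorem), which is consistent with how the result is usually presented.
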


A recent application by Downarowicz, Huczek and Zhang~\cite{dhz} uses Hall's
theorem to show that any countable amenable group $G$ has a tiling into finite
tiles of only finitely many distinct shapes, where the tiles are almost
invariant under any given finite subset of $G$. (I am grateful to
Josh Frisch for this information.) They use a slightly different infinite
version of Hall's theorem: they assume a countable set $\mathcal{T}$ of
subsets of a countable set $X$ such that there exists a positive integer $N$
with the property that each point of $X$ lies in at most $N$ members of
$\mathcal{T}$, while each set in $\mathcal{T}$ contains at least $N$ members
of $X$.

A general necessary and sufficient condition for an arbitrary family of sets
to have a transversal was found by Aharoni, Nsh-Williams and Shelah~\cite{anws}.
I will not describe the precise result here.

\section{A final variant}

I will finish with a generalization of Hall's theorem due to Aharoni and
Haxell~\cite{ah}. This uses the notion of a \emph{hypergraph}, a structure
consisting of a set of vertices and a collection of subsets called
\emph{edges} (or sometimes \emph{hyperedges}). The authors define a
\emph{system of distinct representatives} for a family
$\mathcal{A}=\{H_1,\ldots,H_m\}$ of hpergraphs to be a function $f$ which
selects an edge $f(H_i)$ of each hypergraph $H_i$ in such a way that
$f(H_i)\in H_i$ for $i=1,\ldots,m$.

A \emph{matching} is a set of pairwise disjoint edges. 
A set $F$ of edges is said to be \emph{pinned} by a set $K$ of edges if
every edge in $F$ is met by some edge in $K$.

\begin{theorem}
A sufficient condition for a family $\mathcal{A}$ of hypergraphs to have an SDR
is that, for every subfamily $\mathcal{B}$ of $\mathcal{A}$, there is a
matching $M$ in $\bigcup\mathcal{B}$ which cannot be pinned by fewer than
$|\mathcal{B}|$ disjoint edges from $\bigcup\mathcal{B}$.
\end{theorem}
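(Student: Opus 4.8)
The plan is to prove this by the topological method. The idea is to translate the statement into a question about an independent transversal in a line graph, to convert the pinning hypothesis into a lower bound on the topological connectivity of certain matching complexes, and then to feed this into a topological version of Hall's theorem (one guaranteeing an independent system of representatives), whose core is a Sperner-type selection lemma.

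First I would fix the reformulation. Here an SDR must mean a choice of \emph{pairwise disjoint} edges, one from each $H_i$; otherwise the statement is vacuous, and with this reading the hypothesis specializes to the usual Hall condition when each $H_i$ is a family of singletons. Write $\mathcal{G}=\bigcup\mathcal{A}$, and — via a routine tagging to handle edges belonging to several of the $H_i$ — regard the edge set of $\mathcal{G}$ as partitioned into classes $E_1,\dots,E_m$, where $E_i$ is a copy of the edge set of $H_i$. In the line graph $L(\mathcal{G})$ (vertices the edges of $\mathcal{G}$, two adjacent when they intersect), an SDR for $\mathcal{A}$ is exactly an \emph{independent transversal}: an independent set containing one vertex from each $E_i$. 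A pinning set for a tagged copy pulls back to one in the original, so the hypothesis is not weakened by this reduction.

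The substantive step is a connectivity lemma. For a hypergraph $\mathcal{H}$, let $\mathcal{M}(\mathcal{H})$ be its \emph{matching complex} (equivalently, the independence complex of $L(\mathcal{H})$), whose faces are the matchings of $\mathcal{H}$. I would aim to show: if $\mathcal{H}$ has a matching that cannot be pinned by fewer than $k$ pairwise disjoint edges of $\mathcal{H}$, then $\mathcal{M}(\mathcal{H})$ is $(k-2)$-connected. Applying this with $\mathcal{H}=\bigcup_{i\in J}H_i$ for each $J\subseteq\{1,\dots,m\}$, the hypothesis of the theorem says precisely that $\mathcal{M}(\bigcup_{i\in J}H_i)$ is $(|J|-2)$-connected for every $J$. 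This is exactly the input demanded by the topological Hall theorem (Aharoni--Berger--Ziv, Meshulam) for the existence of an independent transversal of $\{E_1,\dots,E_m\}$ in $L(\mathcal{G})$, and that transversal is the desired SDR. (That theorem is itself proved by triangulating an $(m-1)$-simplex with vertices indexed by $1,\dots,m$, using the connectivity bounds to construct a Sperner-type labelling in which a triangulation vertex lying on the face spanned by $\{v_i:i\in J\}$ is labelled by an edge of $\bigcup_{i\in J}H_i$, and then extracting a fully-labelled small cell from Sperner's lemma: its labels form a matching meeting every part.)

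The main obstacle is the connectivity lemma; that is where essentially all the content sits, and I do not expect the ``dummy-element'' device that settled the defect form of Hall to help here. The pinning hypothesis is precisely what keeps a greedy extension of a partial matching from getting stuck: were every usable edge blocked at some stage, the blocking edges would form a small set of disjoint edges pinning the matching built so far, contradicting the hypothesis. Turning this into a statement about homotopy groups — say by a nerve-lemma decomposition of $\mathcal{M}(\mathcal{H})$ around the stars of the edges of the given matching together with induction on $k$, or by running the Sperner labelling directly and using the pinning condition to certify that it is well defined on every face — is the delicate part. Everything else (the tagging reduction, the degenerate cases $|\mathcal{B}|\in\{0,1\}$, the handling of empty edges and empty hypergraphs, and the exact citation of the topological Hall theorem) is bookkeeping.
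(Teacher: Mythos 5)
The paper does not actually prove this theorem; it only records that the Aharoni--Haxell argument is topological and rests on Sperner's Lemma. Your outline is faithful to that route --- indeed it is the standard modern packaging of it: pass to the line graph, so that an SDR (which, as you rightly note, must mean a system of \emph{pairwise disjoint} representative edges, a requirement omitted from the statement as printed) becomes an independent transversal; show that the pinning hypothesis forces the matching complex of each $\bigcup_{i\in J}H_i$ to be $(|J|-2)$-connected; and invoke the topological Hall theorem, whose own proof is the Sperner-type labelling of a fine triangulation of the $(m-1)$-simplex that the paper alludes to. The tagging reduction, the degenerate cases, and the specialization to singleton edges are all handled correctly.

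The gap is the one you yourself flag: the connectivity lemma is stated as a goal, not proved, and it is where the entire content of the theorem lives. The heuristic you offer --- that the pinning condition prevents a greedy extension of a partial matching from getting stuck --- establishes only that certain matchings can be enlarged; it says nothing about the vanishing of $\pi_0,\ldots,\pi_{k-2}$ of the matching complex. Getting from ``every set of fewer than $k$ disjoint edges fails to pin $M$'' to genuine $(k-2)$-connectedness requires a real argument: either the induction in which a map of a sphere of dimension at most $k-2$ into the complex is deformed into the star of a suitably chosen edge of $M$ (the pinning hypothesis supplying an edge of $M$ disjoint from everything relevant in the image), or, as in the original paper, a direct construction of the labelling on an iterated barycentric subdivision in which the pinning condition certifies that a disjoint representative can always be chosen at each triangulation vertex. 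Neither is carried out, so as it stands the proposal is a correct and well-aimed roadmap rather than a proof.
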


If each edge in each hypergraph is a singleton, the hypothesis reduces to that
of Hall's theorem (each hypergraph can be represented as a set, the union of
its singleton edges), and the conclusion is the same as Hall's.

A feature of the proof is that it is topological: it uses Sperner's
Lemma~\cite{sperner}.

\paragraph{Acknowledgement}
I am grateful to several people including John Britnell, Josh Frisch, Scott
Harper, and two reviewers for very helpful comments which have improved the 
paper.

\end{document}